\documentclass[11pt,fleqn]{article}
\usepackage[a4paper,margin=2.5cm]{geometry}
\usepackage[numbers,sort&compress]{natbib}
\usepackage{amsmath,amssymb,amsthm,mathtools}
\usepackage{bm}
\usepackage{hyperref}
\newcommand{\R}{\mathbb{R}}
\newcommand{\divg}{\operatorname{div}}
\newcommand{\vol}{\operatorname{vol}}
\newcommand{\dist}{\operatorname{dist}}
\newcommand{\osc}{\operatorname{osc}}
\newcommand{\Lie}{\mathcal{L}}
\newcommand{\A}{\mathcal{A}}
\newcommand{\ThetaSet}{\Theta}

\theoremstyle{plain}
\newtheorem{theorem}{Theorem}
\newtheorem{proposition}{Proposition}
\newtheorem{corollary}{Corollary}

\theoremstyle{definition}

\theoremstyle{remark}
\newtheorem{remark}{Remark}

\title{\textbf{Convex Density Design for Regional Phase-Volume Contraction of Nonlinear Systems}}

\author{Igor B. Furtat\thanks{Institute for Problems in Mechanical Engineering,
Russian Academy of Sciences, Boljshoy prospekt V.O., 61,
Saint Petersburg, 199178, Russia.
Email: \texttt{cainenash@mail.ru}.
ORCID: 0000-0003-4679-5884.}}

\date{}

\begin{document}

\maketitle

\begin{abstract}
Regional weighted phase-volume contraction for nonlinear systems is studied
using positive densities selected from a finite-dimensional log-affine family.
The affine dependence of the density divergence on the design parameters turns
density selection into a semi-infinite convex optimization problem that
maximizes a certified uniform contraction margin on a compact region. This
provides a constructive alternative to testing a preselected density and
ensures a well-posed design problem under compact parameter constraints.
Finite active-point and minimax characterizations identify the worst-case
states governing the optimum, while rigorous sampling bounds allow continuum
contraction guarantees to be recovered from finitely many inequalities. For
scalar power densities, feasibility is characterized by an exact interval
condition. The same structure is preserved in discrete time and admits
distance-weighted and conformal-geometric interpretations. Overall, the
framework provides computable certificates for detecting regional contraction
that may remain hidden from ordinary divergence. A two-parameter limit-cycle
example demonstrates this effect when both ordinary phase volume and a natural
single-basis density fail.
\end{abstract}

\noindent\textbf{Keywords:} density function, phase-volume contraction,
semi-infinite programming, Poincar\'e map, conformal metric, nonlinear dynamics

\maketitle

\section{Introduction}\label{sec:intro}

The divergence of a vector field determines the infinitesimal change of ambient phase volume under its flow. This classical Liouville viewpoint is complementary to trajectory-wise Lyapunov analysis \cite{Lyapunov,Liouville,Arnold}. A major development of the volume viewpoint is Rantzer's density-function approach \cite{Rantzer01}, closely related in the planar case to the classical Bendixson--Dulac idea \cite{Dulac23}, in which the sign of $\divg(\rho f)$ for a positive density $\rho$ yields almost-global convergence information under suitable hypotheses. Converse and optimization aspects were subsequently studied in, e.g., \cite{Rantzer02,RantzerHedlund03}. The convex structure of density inequalities was recognized early in nonlinear stabilization \cite{RantzerParrilo00} and developed in controller and density synthesis, including sum-of-squares formulations \cite{PrajnaParriloRantzer04,Masubuchi10} and data-driven synthesis \cite{ChoiVaidyaChen21}. Density criteria have also been extended to time-varying and periodically time-varying systems \cite{MasubuchiKikuchi21}. Density and Lyapunov-like criteria for discrete-time almost-global convergence have been proposed in \cite{Kamaldar24}, while a Perron--Frobenius semigroup characterization of almost-global attractivity has been established in \cite{Lorenzetti25}.

A distinct but related line of work studies contraction of geometric objects transported by nonlinear dynamics. Classical contraction theory concerns one-dimensional infinitesimal displacements, whereas $k$-contraction studies decay of $k$-dimensional volumes and restrictions on asymptotic behavior \cite{Wu22}. Recent design-oriented conditions for $k$-contraction further emphasize the current interest in constructive geometric contraction certificates \cite{Cecilia26}. The present study complements density-function stability theory and $k$-contraction by focusing on a finite-dimensional density-design problem. Specifically, the parameters of a prescribed low-dimensional family of positive reference densities are selected so as to maximize a certified contraction rate of weighted $n$-dimensional phase volume over a given region.

Several recent papers use density-like objects for different purposes. Density-dependent dynamical systems and adaptive control are studied in \cite{FurtatTAC26}, while weighted phase volumes combined with time rescaling and reachable-set estimates are considered in \cite{Furtat26Arc}. In contrast, the vector field is not modified or time-rescaled here. The density only changes the reference volume form, and the contribution is the constrained selection of that density inside the fixed log-affine family
\begin{equation}
\label{eq:rho-family-intro}
\rho_\theta(x)=\exp\!\big(\theta^\top\psi(x)\big),
\qquad \theta\in \R^p.
\end{equation}
The basis functions $\psi_1,\ldots,\psi_p$ may encode geometric or physical information, while only the finite-dimensional coefficient vector $\theta$ is optimized. The scalar power family $\rho_\alpha=w^\alpha$ corresponds to $p=1$ and $\psi=\ln w$. Allowing an arbitrary positive density would recover the general density-function search. Fixing the basis converts density selection into a transparent low-dimensional regional design problem.

The affine identity provides the basis for the proposed framework. Its simplicity is precisely what makes the density-design problem tractable: it converts the selection of a positive reference density into a finite-dimensional constrained max-min problem and delivers a complete set of density-specific certificates--convexity of the rate-feasible parameter set, concavity and Lipschitz continuity of the contraction margin, finite active-point optimality conditions, an exact finite-dimensional minimax dual, and an explicit $\varepsilon$-net bound for rigorous sampling. These finite-dimensional consequences constitute the main contribution of the paper. For the considered flow, the local logarithmic rate of change of the density-weighted volume element depends affinely on the density-design parameters. This affine dependence provides the structural basis for the convex formulation developed below. Maximizing a uniform contraction rate on a compact region therefore becomes a semi-infinite convex optimization problem. Semi-infinite programming is a classical optimization framework and has recently appeared in other nonlinear-control problems, including minimal-time control \cite{OustryTacchi26}. Here it arises from restricted density selection and leads to explicit density-design certificates. We establish five consequences useful for certification and computation: (i) rate-feasible parameters form a convex set, with an exact scalar interval formula; (ii) the constrained optimum exists and the margin is concave and Lipschitz; (iii) optimality is equivalent to a finite active-point certificate; (iv) the optimal value has an exact finite-dimensional minimax dual and finite sampling becomes rigorous through an explicit $\varepsilon$-net error bound; (v) the same affine design structure holds for discrete-time maps through the logarithm of the density-weighted Jacobian.

The paper is deliberately formulated at the level of weighted-volume certificates. Decay of weighted volume does not by itself imply Lyapunov stability or attraction of every trajectory. Additional hypotheses are required for pointwise conclusions. This distinction is essential in density-based methods \cite{Rantzer01,Lorenzetti25}. We therefore separate set-wise volume contraction from a squared-distance moment estimate near invariant submanifolds and state a distinct condition when pointwise distance contraction is desired.

The rest of the paper is organized as follows. Section~\ref{sec:setup} gives the transport identity. Section~\ref{sec:design} develops the convex density-design problem, active-point and minimax certificates, rigorous finite sampling, and the scalar exponent characterization. Section~\ref{sec:volume} gives continuous-time volume consequences. Section~\ref{sec:discrete} treats maps and Poincar\'e sections. Section~\ref{sec:submanifold} studies squared-distance moments near invariant submanifolds. Section~\ref{sec:geometry} gives the conformal interpretation. Section~\ref{sec:example} presents an analytically solvable two-parameter example in which independent density shaping succeeds whereas a natural one-basis restriction does not.

\section{Parameterized densities and transport identity}\label{sec:setup}

Let $(M,g)$ be an oriented $n$-dimensional Riemannian manifold with volume form $dV_g$. Consider
\begin{equation*}
\label{eq}
\dot x=f(x),
\end{equation*}
where $f\in C^1(M,TM)$, $TM$ denotes the tangent bundle of $M$, and $f(x)\in T_xM$ for each $x\in M$. Assume that $f$ generates a forward-complete $C^1$ semiflow $\Phi_t$, $t\ge0$. Let $\psi=(\psi_1,\ldots,\psi_p)^\top\in C^1(M,\R^p)$ and define the positive density \eqref{eq:rho-family-intro}.
For a measurable set $D$ with compact closure, define
\begin{equation*}
\label{eq:mu}
\mu_\theta(D):=\int_D\rho_\theta\,dV_g.
\end{equation*}
For a scalar function $h$, write $\Lie_fh=dh(f)=\langle\nabla_gh,f\rangle_g$. Set
\begin{equation*}
\label{eq:bdef}
a(x):=\divg_g f(x),\qquad
b(x):=\Lie_f\psi(x)\in\R^p.
\end{equation*}

The evolution of the density-weighted measure under the flow is characterized by the following transport identity.
\begin{proposition}
\label{prop:transport}
For every measurable set $D\subset M$ with compact closure,
\begin{equation}\label{eq:transport}
\frac{d}{dt}\mu_\theta(\Phi_t(D))
=\int_{\Phi_t(D)}D_\theta f\,\rho_\theta\,dV_g,
\end{equation}
where
\begin{equation}
\label{eq:Dtheta}
D_\theta f
:=\divg_g f+\theta^\top\Lie_f\psi
=\rho_\theta^{-1}\divg_g(\rho_\theta f).
\end{equation}
\end{proposition}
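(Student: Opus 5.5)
The plan is to prove the two parts of \eqref{eq:Dtheta} separately: first the pointwise identity, then the Reynolds-type transport formula \eqref{eq:transport}.

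\emph{Pointwise identity.} Since $\rho_\theta=\exp(\theta^\top\psi)$ is $C^1$ and positive, the Leibniz rule for the Riemannian divergence gives $\divg_g(\rho_\theta f)=\rho_\theta\divg_g f+\Lie_f\rho_\theta$. The chain rule gives $\Lie_f\rho_\theta=\rho_\theta\,\theta^\top\Lie_f\psi=\rho_\theta\,\theta^\top b$. Dividing by $\rho_\theta$ yields $D_\theta f=a+\theta^\top b=\rho_\theta^{-1}\divg_g(\rho_\theta f)$, which is \eqref{eq:Dtheta}.

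\emph{Transport formula.} Write $J_t(y)$ for the Jacobian determinant of $\Phi_t$ relative to $dV_g$, so that $\Phi_t^*dV_g=J_t\,dV_g$. The key step is the Liouville formula $\partial_tJ_t(y)=\divg_g f(\Phi_t(y))\,J_t(y)$, with $J_0=1$. To obtain it, I would differentiate $\Phi_t^*dV_g$ in $t$ using the semigroup property, which reduces the time derivative at time $t$ to the derivative at $t=0$ pulled back by $\Phi_t$. At $t=0$, $\frac{d}{dt}\Phi_t^*dV_g=\Lie_f dV_g=(\divg_g f)\,dV_g$ by the definition of divergence. Hence $J_t>0$ and $J_t$ is $C^1$ in $t$.

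Next, assuming $\Phi_t$ is injective (a flow map, so a diffeomorphism onto its image), change variables: $\mu_\theta(\Phi_t(D))=\int_D\rho_\theta(\Phi_t(y))J_t(y)\,dV_g(y)$. Differentiating the integrand in $t$ and using $\frac{d}{dt}\rho_\theta(\Phi_t(y))=(\Lie_f\rho_\theta)(\Phi_t(y))$ together with Liouville gives $\big(\rho_\theta(a+\theta^\top b)\big)(\Phi_t(y))\,J_t(y)$. Changing variables back yields \eqref{eq:transport}.

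\emph{Main obstacle.} The main difficulty is justifying differentiation under the integral sign. Because $\bar D$ is compact and the semiflow is $C^1$, the set $\{\Phi_s(y): s\in[0,t+1],\ y\in\bar D\}$ is compact. The integrand's $t$-derivative is therefore continuous, hence bounded there, and dominated convergence applies. A secondary point is injectivity of $\Phi_t$ for a semiflow. I would note that uniqueness of solutions for $C^1$ $f$ gives injectivity of $\Phi_t$, so the change of variables is valid on the image, with $J_t>0$ making $\Phi_t$ a local diffeomorphism.
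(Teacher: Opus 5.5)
Your proposal is correct and follows essentially the same route as the paper: change variables to $\int_D\rho_\theta(\Phi_t(y))J_t(y)\,dV_g$, use the Liouville equation $\partial_t J_t=(\divg_g f\circ\Phi_t)J_t$, differentiate under the integral sign on a compact flow tube, change variables back, and finish with the product rule together with $\Lie_f\rho_\theta=\rho_\theta\theta^\top\Lie_f\psi$. You also supply details the paper leaves implicit, namely a derivation of Liouville's formula via the semigroup property and injectivity of $\Phi_t$ from uniqueness of solutions.
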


\begin{proof}
Let $J_t(x)$ be the positive Riemannian Jacobian of $\Phi_t$. Change of variables gives
\[
\mu_\theta(\Phi_t(D))
=\int_D \rho_\theta(\Phi_t(x))J_t(x)\,dV_g(x).
\]
Along the flow, $\dot J_t=(\divg_g f)\circ\Phi_t\,J_t$. Since $D$ has compact closure, differentiation under the integral sign is justified on every bounded time interval on which the image remains in a compact flow tube. Hence
\[
\frac{d}{dt}\mu_\theta(\Phi_t(D))
=\int_D\big(\Lie_f\rho_\theta+\rho_\theta\divg_gf\big)(\Phi_t(x))J_t(x)\,dV_g(x),
\]
and changing variables back yields \eqref{eq:transport}. Finally,
$\Lie_f\rho_\theta=\rho_\theta\theta^\top\Lie_f\psi$, and the last equality in \eqref{eq:Dtheta} is the product rule for divergence.
\end{proof}

\begin{remark}
\label{rem:power}
For $p=1$, let $w>0$ be $C^1$ and choose $\psi=\ln w$, $\theta=\alpha\in\R$. Then
\[
\rho_\alpha=w^\alpha,\qquad
D_\alpha f=\divg_gf+\alpha\Lie_f\ln w
=w^{-\alpha}\divg_g(w^\alpha f).
\]
Thus the power-weighted phase volume is a one-parameter subfamily of the density-divergence construction.
\end{remark}

\section{Density shaping as a convex design problem}\label{sec:design}

Let $K\subset M$ be nonempty and compact. The design objective is to make $D_\theta f$ uniformly negative on $K$. Define the certified contraction margin
\begin{equation}\label{eq:q}
q_K(\theta):=\min_{x\in K}\big[-D_\theta f(x)\big]
=\min_{x\in K}\big[-a(x)-\theta^\top b(x)\big].
\end{equation}
The minimum exists because $a$ and $b$ are continuous on $K$.
A parameter $\theta$ certifies uniform weighted-volume contraction on $K$ precisely when $q_K(\theta)>0$. 
In that case any $0<\lambda\le q_K(\theta)$ is a certified contraction rate. Values $q_K(\theta)\le0$ are retained as signed margins but do not certify contraction.

The following theorem establishes the geometric structure of the density-design problem and underpins all subsequent optimality and computational certificates: concavity and Lipschitz continuity of the contraction margin, convexity and compactness of the rate-feasible parameter sets, attainment of the constrained optimum, and preservation of convexity under density-conditioning constraints.

\begin{theorem}
\label{th:design}
Let $K\subset M$ be nonempty and compact and let $\ThetaSet\subset\R^p$ be a nonempty compact convex set. Then:
\begin{enumerate}
\item $q_K$ is concave on $\R^p$. Moreover, with $B_K:=\max_{x\in K}\|b(x)\|_2$,
\begin{equation}
\label{eq:lipschitz}
|q_K(\theta_1)-q_K(\theta_2)|\le B_K\|\theta_1-\theta_2\|_2.
\end{equation}
\item For every prescribed rate $\lambda\in\R$, the admissible parameter set
\begin{equation*}
\label{eq:A}
\A_K(\lambda):=\{\theta\in\ThetaSet:D_\theta f(x)\le-\lambda\ \forall x\in K\}
\end{equation*}
is compact and convex, and admits the half-space representation
\begin{equation}\label{eq:halfspaces}
\A_K(\lambda)=\ThetaSet\cap
\bigcap_{x\in K}\{\theta:\theta^\top b(x)\le-\lambda-a(x)\}.
\end{equation}
\item The optimal constrained margin
\begin{equation}\label{eq:qstar}
q_K^*:=\max_{\theta\in\ThetaSet}q_K(\theta)
\end{equation}
is attained. Equivalently, $(\theta^*,q_K^*)$ solves the semi-infinite convex program
\begin{equation}\label{eq:sip}
\begin{aligned}
\text{maximize}_{\theta\in\ThetaSet,\,\lambda\in\R}\quad &\lambda\\
\text{subject to}\quad &a(x)+\theta^\top b(x)+\lambda\le0,
\qquad x\in K.
\end{aligned}
\end{equation}
\item For every $\kappa_\rho\ge1$, the conditioning-restricted set
\begin{equation}\label{eq:Theta-kappa}
\ThetaSet_{\kappa_\rho}
:=\{\theta\in\ThetaSet:\osc_K(\theta^\top\psi)\le\ln\kappa_\rho\}
\end{equation}
is compact and convex. If $\ThetaSet_{\kappa_\rho}$ is nonempty, the max-margin problem remains a convex program and attains its optimum after imposing this distortion bound.
\end{enumerate}
\end{theorem}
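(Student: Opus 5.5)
The whole theorem rests on one structural fact: for each fixed $x\in K$, the map $\theta\mapsto -D_\theta f(x)=-a(x)-\theta^\top b(x)$ is affine in $\theta$, with coefficients continuous in $x$. The plan is to prove each item by expressing the relevant object as a pointwise minimum or intersection over $x\in K$ of affine data, and then using compactness of $K$ and $\ThetaSet$ for attainment.

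For item 1, $q_K$ is a pointwise minimum of the affine functions $\theta\mapsto -a(x)-\theta^\top b(x)$, so it is concave. Finiteness for every $\theta$ holds because $a$ and $b$ are continuous on the compact set $K$, which also makes $B_K$ finite. For the Lipschitz bound, take $x_2\in K$ to be a minimizer for $\theta_2$. Then
\[
q_K(\theta_1)\le -a(x_2)-\theta_1^\top b(x_2)=q_K(\theta_2)+(\theta_2-\theta_1)^\top b(x_2).
\]
By Cauchy--Schwarz the last term is at most $q_K(\theta_2)+B_K\|\theta_1-\theta_2\|_2$. Exchanging the roles of $\theta_1$ and $\theta_2$ gives \eqref{eq:lipschitz}.

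For item 2, the half-space representation \eqref{eq:halfspaces} is simply the definition of $\A_K(\lambda)$ rewritten with $D_\theta f=a+\theta^\top b$ from Proposition~\ref{prop:transport}. Each set $\{\theta:\theta^\top b(x)\le-\lambda-a(x)\}$ is a closed half-space, or all of $\R^p$ or empty when $b(x)=0$. Any intersection of such sets is closed and convex. Intersecting with the compact convex set $\ThetaSet$ then yields a compact convex set, possibly empty, which is allowed.

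For item 3, $q_K$ is continuous (indeed Lipschitz by item 1) and $\ThetaSet$ is compact, so Weierstrass gives a maximizer $\theta^*$. For the equivalence with \eqref{eq:sip}: a pair $(\theta,\lambda)$ is feasible exactly when $\lambda\le q_K(\theta)$. Hence the program's optimal value is $\sup_{\theta\in\ThetaSet}q_K(\theta)=q_K^*$, and this value is attained at $(\theta^*,q_K^*)$. Conversely, any optimal pair has $\lambda=q_K(\theta)=q_K^*$. The program is convex because the objective is linear, the feasible set is an intersection of closed half-spaces in $(\theta,\lambda)$, and $\ThetaSet$ is convex.

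For item 4, write $\osc_K(h)=\max_K h-\min_K h$. The map $\theta\mapsto\max_{x\in K}\theta^\top\psi(x)$ is convex, as a maximum of linear functions, and $\theta\mapsto-\min_{x\in K}\theta^\top\psi(x)=\max_{x\in K}(-\theta^\top\psi(x))$ is also convex. So $\theta\mapsto\osc_K(\theta^\top\psi)$ is convex. By the same argument as in item 1 it is continuous, in fact Lipschitz with constant $2\max_K\|\psi\|_2$. Its sublevel set at $\ln\kappa_\rho\ge0$ is therefore closed and convex, and intersecting it with $\ThetaSet$ gives a compact convex set. When this set is nonempty, item 3 applies verbatim with $\ThetaSet$ replaced by $\ThetaSet_{\kappa_\rho}$, since item 3 used only nonemptiness, compactness and convexity. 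This gives a convex program whose optimum is attained.

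None of these steps is a serious obstacle. The only points needing care are the finiteness and continuity of the min/max functionals, which rely on continuity of $a$, $b$ and $\psi$ on compact $K$ (here $f\in C^1$ and $\psi\in C^1$ are used), and the degenerate cases $b(x)=0$ or an empty admissible set in item 2.
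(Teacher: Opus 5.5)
Your proposal is correct and takes essentially the same route as the paper. It uses concavity as a pointwise minimum of affine functions, the Lipschitz bound via Cauchy--Schwarz with $B_K$ and exchanging $\theta_1,\theta_2$, the half-space representation intersected with compact $\ThetaSet$, Weierstrass plus the hypograph reformulation for the semi-infinite program, and convexity of $\osc_K(\theta^\top\psi)$ as a sum of two convex max-functions; your explicit treatment of the degenerate case $b(x)=0$ and of continuity (hence closedness) of the oscillation sublevel set only fills in details the paper leaves implicit.
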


\begin{proof}
For each fixed $x\in K$, the function $-a(x)-\theta^\top b(x)$ is affine in $\theta$. The pointwise minimum of affine functions is concave, proving the first statement in item 1. For any $\theta_1,\theta_2$,
\[
-a(x)-\theta_1^\top b(x)
\ge -a(x)-\theta_2^\top b(x)-B_K\|\theta_1-\theta_2\|_2.
\]
Taking the minimum over $x$ and then interchanging $\theta_1,\theta_2$ gives \eqref{eq:lipschitz}. Hence $q_K$ is continuous.

Equation \eqref{eq:halfspaces} follows directly from the pointwise inequalities. It is an intersection of the compact convex set $\ThetaSet$ with closed half-spaces, so it is compact and convex. Finally, continuity of $q_K$ and compactness of $\ThetaSet$ imply attainment of \eqref{eq:qstar}. The hypograph formulation of the max-min problem is exactly \eqref{eq:sip}. All constraints are affine in $(\theta,\lambda)$. Finally, $\theta\mapsto\osc_K(\theta^\top\psi)$ is convex because it is the sum of the convex support function $\max_K\theta^\top\psi$ and the convex function $\max_K(-\theta^\top\psi)$. Therefore \eqref{eq:Theta-kappa} is a closed convex subset of the compact set $\ThetaSet$.
\end{proof}

The semi-infinite formulation also admits a finite first-order certificate. Define the active set
\begin{equation*}
\label{eq:active}
X(\theta):=\{x\in K:-a(x)-\theta^\top b(x)=q_K(\theta)\}.
\end{equation*}

A distinctive feature of the semi-infinite convex program is that its global optimality admits an exact finite-dimensional characterization, i.e. it is necessary and sufficient to verify a balance condition on finitely many worst-case states. This yields the following statement.

\begin{proposition}
\label{prop:active}
Let the assumptions of Theorem~\ref{th:design} hold. A point $\bar\theta\in\ThetaSet$ maximizes $q_K$ over $\ThetaSet$ if and only if there exist active points $x_1,\ldots,x_m\in X(\bar\theta)$, with $m\le p+1$, and coefficients $\nu_i\ge0$, $\sum_{i=1}^m\nu_i=1$, such that
\begin{equation}\label{eq:normal-cert}
-\sum_{i=1}^m\nu_i b(x_i)\in N_{\ThetaSet}(\bar\theta),
\end{equation}
where
$N_{\ThetaSet}(\bar\theta)
:=\{v:v^\top(\theta-\bar\theta)\le0\ \text{for all }\theta\in\ThetaSet\}$
is the normal cone. In particular, if $\bar\theta$ lies in the interior of $\ThetaSet$, optimality is equivalent to the existence of active points and weights satisfying
\begin{equation}\label{eq:balance}
\sum_{i=1}^m\nu_i b(x_i)=0.
\end{equation}
\end{proposition}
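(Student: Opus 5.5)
The plan is to apply the standard optimality condition for maximizing a concave function over a convex set, combined with Danskin's theorem for the subdifferential of a pointwise minimum of affine functions. By Theorem~\ref{th:design}, $q_K$ is concave and finite on $\R^p$. Hence $\bar\theta$ maximizes $q_K$ over $\ThetaSet$ if and only if
\[
0\in\partial(-q_K)(\bar\theta)+N_{\ThetaSet}(\bar\theta).
\]
This is the Moreau--Rockafellar sum rule, which applies without a constraint qualification because $-q_K$ is finite, hence continuous, on all of $\R^p$. Here $\partial(-q_K)$ denotes the convex subdifferential of the convex function $-q_K$. The key step is to identify this subdifferential.

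Write $-q_K(\theta)=\max_{x\in K}\phi(\theta,x)$ with $\phi(\theta,x)=a(x)+\theta^\top b(x)$. Each $\phi(\cdot,x)$ is affine with gradient $b(x)$. The maps $\phi$ and $\nabla_\theta\phi$ are jointly continuous on $\R^p\times K$, and $K$ is compact. Under these hypotheses the Danskin / Valadier formula gives
\[
\partial(-q_K)(\bar\theta)=\operatorname{conv}\{b(x):x\in X(\bar\theta)\},
\]
where $X(\bar\theta)$ is nonempty and compact because it is the set of maximizers of a continuous function on $K$.

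\emph{Justifying the Danskin formula (the main obstacle).} If I cannot simply cite the formula, I would prove it directly.
\begin{itemize}
\item[] \emph{Inclusion $\supseteq$.} This is immediate. For $x\in X(\bar\theta)$ and any $\theta$, we have $-q_K(\theta)\ge\phi(\theta,x)=-q_K(\bar\theta)+b(x)^\top(\theta-\bar\theta)$, so $b(x)$ is a subgradient. Convexity of the subdifferential then gives the convex hull.
\item[] \emph{Inclusion $\subseteq$.} Argue via directional derivatives. First show $(-q_K)'(\bar\theta;d)=\max_{x\in X(\bar\theta)}b(x)^\top d$. Take near-maximizers $x_t$ for $\bar\theta+td$; by compactness, their limit points lie in $X(\bar\theta)$. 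The right-hand side is the support function of $C:=\operatorname{conv}\, b(X(\bar\theta))$, which is compact as the convex hull of a compact set in $\R^p$. A closed convex set is determined by its support function, and the directional derivative is the support function of the subdifferential, so the two sets coincide.
\end{itemize}

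Combining the two steps, optimality is equivalent to the existence of $g\in C$ with $-g\in N_{\ThetaSet}(\bar\theta)$. By Carathéodory's theorem in $\R^p$, any $g\in C$ can be written as $\sum_{i=1}^m\nu_i b(x_i)$ with $m\le p+1$, $x_i\in X(\bar\theta)$, $\nu_i\ge0$ and $\sum_i\nu_i=1$. This yields \eqref{eq:normal-cert}, and the equivalence runs in both directions.

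Sufficiency can also be checked by hand without the sum rule. For any $\theta\in\ThetaSet$,
\[
q_K(\theta)\le\sum_i\nu_i\big(-a(x_i)-\theta^\top b(x_i)\big)=q_K(\bar\theta)-\Big(\sum_i\nu_i b(x_i)\Big)^\top(\theta-\bar\theta)\le q_K(\bar\theta).
\]
The first inequality holds because each term is at least $q_K(\theta)$. The equality uses that each $x_i$ is active at $\bar\theta$. The last inequality is \eqref{eq:normal-cert}. Finally, if $\bar\theta\in\operatorname{int}\ThetaSet$, then $N_{\ThetaSet}(\bar\theta)=\{0\}$, which gives \eqref{eq:balance}.
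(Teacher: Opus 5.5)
Your proposal is correct and follows the paper's proof. Necessity uses Danskin's formula for the pointwise minimum, the first-order condition for concave maximization over $\ThetaSet$, and Carath\'eodory to reduce to $m\le p+1$ active points; sufficiency uses the same direct inequality chain, and the interior case uses the trivial normal cone. The differences are cosmetic: you work with the convex function $-q_K$ via the Moreau--Rockafellar sum rule rather than the superdifferential $\partial^+q_K$, and you sketch a self-contained proof of the Danskin formula where the paper simply cites it.
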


\begin{proof}
Suppose first that $\bar\theta$ maximizes $q_K$. Because $K$ is compact and the affine integrand in \eqref{eq:q} is jointly continuous in $(x,\theta)$, Danskin's theorem for a pointwise minimum gives
\[
\partial^+q_K(\bar\theta)
=\operatorname{co}\{-b(x):x\in X(\bar\theta)\},
\]
see the original max--min result in \cite{Danskin66}. The first-order condition for maximizing a finite concave function over a closed convex set is
$\partial^+q_K(\bar\theta)\cap N_{\ThetaSet}(\bar\theta)\ne\varnothing$, see, e.g., \cite{BoydVandenberghe04}. Thus some convex combination of active supergradients belongs to the normal cone. By Carath\'eodory's theorem, at most $p+1$ active points are required, which proves \eqref{eq:normal-cert}.

Conversely, suppose active points and weights satisfy \eqref{eq:normal-cert}, and set
\[
g:=-\sum_{i=1}^m\nu_i b(x_i)\in N_{\ThetaSet}(\bar\theta).
\]
For every $\theta\in\ThetaSet$, the definition of $q_K$ and activity of the $x_i$ give
\begin{align*}
q_K(\theta)
&\le \sum_{i=1}^m\nu_i[-a(x_i)-\theta^\top b(x_i)]\\
&=q_K(\bar\theta)+g^\top(\theta-\bar\theta)
\le q_K(\bar\theta),
\end{align*}
where the last inequality is the normal-cone condition. Hence $\bar\theta$ is a global maximizer. If $\bar\theta$ is interior to $\ThetaSet$, then $N_{\ThetaSet}(\bar\theta)=\{0\}$, so \eqref{eq:normal-cert} reduces to \eqref{eq:balance}. 
The converse just proved shows that this balance condition is also sufficient.
\end{proof}

The max--min problem also has an exact finite-dimensional dual representation. Define
\begin{equation*}
\label{eq:CK}
\mathcal C_K:=\operatorname{co}\{(a(x),b(x)):x\in K\}\subset\R^{p+1}
\end{equation*}
and let
$\sigma_{\ThetaSet}(y):=\max_{\theta\in\ThetaSet}\theta^\top y$
be the support function of the compact convex parameter set.

Beyond the primal active-point certificate, the semi-infinite program admits an exact finite-dimensional dual representation. This dual formulation provides a complementary global optimality condition and reveals that the dual optimizer is supported on finitely many active states, yielding the following result.

\begin{proposition}
\label{prop:minimax}
Under the assumptions of Theorem~\ref{th:design},
\begin{equation}\label{eq:minimax-dual}
q_K^*=\min_{(A,B)\in\mathcal C_K}
\big[-A+\sigma_{\ThetaSet}(-B)\big].
\end{equation}
The minimum is attained. Moreover, for any primal optimizer $\theta^*$, an optimal pair $(A^*,B^*)$ admits a representation
\begin{equation}\label{eq:finite-dual-support}
(A^*,B^*)=\sum_{i=1}^m\nu_i\big(a(x_i),b(x_i)\big),
~~~ m\le p+2,
\end{equation}
with $x_i\in X(\theta^*)$, $\nu_i>0$, and $\sum_i\nu_i=1$.
\end{proposition}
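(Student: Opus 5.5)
The plan is to write $q_K^*$ as a saddle value of a bilinear function over two compact convex sets and then apply a minimax theorem. Every point of $\mathcal C_K$ is a convex combination of points $(a(x),b(x))$, so for fixed $\theta$,
\[
q_K(\theta)=\min_{x\in K}\big[-a(x)-\theta^\top b(x)\big]=\min_{(A,B)\in\mathcal C_K}\big[-A-\theta^\top B\big].
\]
The reason is that a linear function on the convex hull attains its minimum at a generating point. Since $K$ is compact and $(a,b)$ is continuous, the image $\{(a(x),b(x)):x\in K\}$ is compact in $\R^{p+1}$. Its convex hull $\mathcal C_K$ is therefore compact, because the convex hull of a compact set in finite dimensions is compact by Carath\'eodory. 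Hence
\[
q_K^*=\max_{\theta\in\ThetaSet}\min_{(A,B)\in\mathcal C_K}L(\theta,A,B),\qquad L(\theta,A,B):=-A-\theta^\top B.
\]
Here $L$ is bilinear, in particular concave in $\theta$ and convex in $(A,B)$, and both feasible sets are compact and convex.

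Next I will invoke von Neumann's (or Sion's) minimax theorem to interchange max and min. This gives
\[
q_K^*=\min_{(A,B)\in\mathcal C_K}\max_{\theta\in\ThetaSet}\big[-A-\theta^\top B\big]=\min_{(A,B)\in\mathcal C_K}\big[-A+\sigma_{\ThetaSet}(-B)\big],
\]
which is \eqref{eq:minimax-dual}. The minimum is attained because $\sigma_{\ThetaSet}$ is a finite convex, hence continuous, function and $\mathcal C_K$ is compact. Compactness of both sets also gives a saddle point. So if $(A^*,B^*)$ is any dual minimizer and $\theta^*$ any primal maximizer (attained by Theorem~\ref{th:design}), the pair $(\theta^*,(A^*,B^*))$ is a saddle point of $L$.

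For the support representation \eqref{eq:finite-dual-support}, I take such a dual optimizer $(A^*,B^*)$. By the saddle property,
\[
-A^*-\theta^{*\top}B^*=q_K^*=q_K(\theta^*).
\]
Carath\'eodory's theorem in $\R^{p+1}$ writes $(A^*,B^*)$ as a convex combination of at most $p+2$ generators $(a(x_i),b(x_i))$. After discarding zero weights, all $\nu_i>0$. Then
\[
\sum_i\nu_i\big[-a(x_i)-\theta^{*\top}b(x_i)\big]=q_K(\theta^*),
\]
and each bracket is at least $q_K(\theta^*)$ by the definition of $q_K$. With positive weights, every bracket must therefore equal $q_K(\theta^*)$, which says exactly that $x_i\in X(\theta^*)$.

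The only delicate point is the saddle-point step. We need equality $-A^*-\theta^{*\top}B^*=q_K^*$ for the specific primal optimizer $\theta^*$, not merely for some optimizer. This follows from the standard fact that, once minimax equality holds with both extrema attained, any primal optimizer paired with any dual optimizer forms a saddle point. Explicitly,
\[
q_K^*=\min_{\mathcal C_K}L(\theta^*,\cdot)\le L(\theta^*,A^*,B^*)\le\max_{\ThetaSet}L(\cdot,A^*,B^*)=q_K^*.
\]
So the equality needed in the active-point argument holds.
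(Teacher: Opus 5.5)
Your proposal is correct and follows essentially the same route as the paper: you rewrite $q_K(\theta)$ as a minimum over the compact convex hull $\mathcal C_K$, exchange max and min by Sion's (or von Neumann's) minimax theorem, and then use Carath\'eodory in $\R^{p+1}$ together with the positive-weight averaging argument to place the support points in $X(\theta^*)$. Your explicit sandwich $q_K^*\le L(\theta^*,A^*,B^*)\le q_K^*$ simply spells out the step that the paper compresses into ``the minimax equality implies $F(\theta^*,A^*,B^*)=q_K^*$.''
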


\begin{proof}
Because the image of $K$ under $x\mapsto(a(x),b(x))$ is compact, its convex hull $\mathcal C_K$ is compact in the finite-dimensional space $\R^{p+1}$. For
\[
F(\theta,A,B):=-A-\theta^\top B,
\]
the minimum over the generating set equals the minimum over its convex hull, since $F$ is affine in $(A,B)$. Hence
\[
q_K^*=\max_{\theta\in\ThetaSet}\min_{(A,B)\in\mathcal C_K}F(\theta,A,B).
\]
Both feasible sets are compact and convex, and $F$ is continuous and affine in each argument. Sion's minimax theorem \cite{Sion58} therefore yields
\[
q_K^*=\min_{(A,B)\in\mathcal C_K}\max_{\theta\in\ThetaSet}F(\theta,A,B),
\]
which is \eqref{eq:minimax-dual}. Attainment follows from compactness. Carath\'eodory's theorem in $\R^{p+1}$ gives the representation \eqref{eq:finite-dual-support} with at most $p+2$ points.

For the finite-support statement, choose a dual optimizer $(A^*,B^*)$ and the prescribed primal optimizer $\theta^*$. The minimax equality implies
$F(\theta^*,A^*,B^*)=q_K^*$. By Carath\'eodory, represent $(A^*,B^*)$ by at most $p+2$ points and discard zero coefficients. Since
$-a(x)-{\theta^*}^\top b(x)\ge q_K^*$ for every $x\in K$, while their positive weighted average equals $q_K^*$, every supporting point must satisfy equality and hence belongs to $X(\theta^*)$.
\end{proof}

For numerical verification, a finite net can replace the continuum $K$ while retaining an explicit error bound.

\begin{proposition}
\label{prop:grid}
Assume $K$ is a compact metric space and, on $K$,
\[
|a(x)-a(y)|\le L_a d(x,y),~~~
\|b(x)-b(y)\|_2\le L_b d(x,y).
\]
Let $\varepsilon>0$ and let $G_\varepsilon\subset K$ be a finite $\varepsilon$-net. Then, for every $\theta\in\R^p$,
\begin{equation}\label{eq:grid-bound}
\begin{split}
q_K(\theta)\ge{}&
\min_{z\in G_\varepsilon}[-a(z)-\theta^\top b(z)]\\
&-(L_a+\|\theta\|_2L_b)\varepsilon.
\end{split}
\end{equation}
Consequently, if
\begin{equation}\label{eq:grid-cert}
a(z)+\theta^\top b(z)
\le-\lambda-(L_a+\|\theta\|_2L_b)\varepsilon
\end{equation}
for all $z\in G_\varepsilon$, then $D_\theta f\le-\lambda$ on all of $K$.
\end{proposition}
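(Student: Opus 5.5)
The argument is a pointwise comparison, followed by minimizing over $x\in K$. Fix $\theta\in\R^p$ and write $h_\theta(x):=-a(x)-\theta^\top b(x)$, so that $q_K(\theta)=\min_{x\in K}h_\theta(x)$ by \eqref{eq:q}. The first step is to show that $h_\theta$ is Lipschitz on $K$ with constant $L_a+\|\theta\|_2L_b$. For $x,y\in K$ we have
\[
|h_\theta(x)-h_\theta(y)|\le|a(x)-a(y)|+|\theta^\top(b(x)-b(y))|,
\]
and the Cauchy--Schwarz inequality together with the two assumed Lipschitz bounds gives the claimed constant.

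Next, take a minimizer $x^*\in K$ of $h_\theta$; it exists because $a$ and $b$ are continuous on the compact set $K$. By the definition of an $\varepsilon$-net there is some $z\in G_\varepsilon$ with $d(x^*,z)\le\varepsilon$. The Lipschitz bound then yields
\[
q_K(\theta)=h_\theta(x^*)\ge h_\theta(z)-(L_a+\|\theta\|_2L_b)\varepsilon\ge\min_{z'\in G_\varepsilon}h_\theta(z')-(L_a+\|\theta\|_2L_b)\varepsilon,
\]
which is exactly \eqref{eq:grid-bound}. If one prefers to avoid attainment, the same estimate can be applied to an arbitrary $x\in K$ and the infimum over $x$ taken afterwards; the conclusion is unchanged.

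For the consequence, suppose \eqref{eq:grid-cert} holds at every $z\in G_\varepsilon$. Then $h_\theta(z)\ge\lambda+(L_a+\|\theta\|_2L_b)\varepsilon$ for all grid points. Substituting this into \eqref{eq:grid-bound} gives $q_K(\theta)\ge\lambda$. By \eqref{eq:q} this means $-D_\theta f(x)\ge\lambda$ for every $x\in K$, that is, $D_\theta f\le-\lambda$ on $K$.

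No step here is a real obstacle. The only point needing care is the convention for the net: it must be taken in $K$ with $d(x,G_\varepsilon)\le\varepsilon$ for every $x\in K$, so that each point of $K$ has a grid point within distance $\varepsilon$.
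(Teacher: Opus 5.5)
Your proposal is correct and follows essentially the same route as the paper. Both arguments compare each point of $K$ with a net point within distance $\varepsilon$, use Cauchy--Schwarz with the Lipschitz bounds, minimize over $K$, and then substitute the grid certificate to get $q_K(\theta)\ge\lambda$, i.e.\ $D_\theta f\le-\lambda$ on $K$.
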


\begin{proof}
For any $x\in K$, choose $z\in G_\varepsilon$ with $d(x,z)\le\varepsilon$. Cauchy--Schwarz and the Lipschitz assumptions give
\[
-a(x)-\theta^\top b(x)
\ge-a(z)-\theta^\top b(z)
-(L_a+\|\theta\|_2L_b)\varepsilon.
\]
Taking the minimum over $x$ proves \eqref{eq:grid-bound}, \eqref{eq:grid-cert} then implies the continuum inequality.
\end{proof}

\begin{remark}
\label{rem:computation}
For design over compact $\ThetaSet$, let $R_\Theta:=\max_{\theta\in\ThetaSet}\|\theta\|_2$. Replacing $\|\theta\|_2$ in \eqref{eq:grid-cert} by $R_\Theta$ gives the uniform sufficient constraints
\[
a(z)+\theta^\top b(z)
\le-\lambda-(L_a+R_\Theta L_b)\varepsilon,
\qquad z\in G_\varepsilon.
\]
These constraints are affine in $(\theta,\lambda)$. Hence, for polyhedral $\ThetaSet$, the sampled certified max-margin problem is a linear program. Retaining the sharper parameter-dependent norm in \eqref{eq:grid-cert} instead gives a convex norm-constrained problem. Proposition~\ref{prop:grid} supplies the continuum guarantee, and the grid may be refined where the sampled residual is smallest.
\end{remark}

\begin{remark}
\label{rem:constraint}
Without a restriction on $\theta$, the supremum of $q_K(\theta)$ need not be finite. For example, if a scalar $b(x)=\Lie_f\psi(x)$ is uniformly positive on $K$, then $q_K(\alpha)\to+\infty$ as $\alpha\to-\infty$. This does not represent arbitrarily fast physical contraction of trajectories. It reflects an increasingly distorted reference measure. A compact design set $\ThetaSet$ therefore has both a mathematical and an interpretive role.

One useful additional restriction is a bound on density conditioning,
\begin{equation}\label{eq:conditioning}
\frac{\max_K\rho_\theta}{\min_K\rho_\theta}
=\exp\!\big(\osc_K(\theta^\top\psi)\big)\le\kappa_\rho.
\end{equation}
As shown in Theorem~\ref{th:design}, \eqref{eq:conditioning} is a convex constraint. It does not, in general, make the parameter set compact: multiplication of a density by a positive constant leaves $D_\theta f$ unchanged, and redundant basis directions may therefore remain unbounded. Thus \eqref{eq:conditioning} should be combined with a compact box or ellipsoid, or with an explicit normalization that removes such gauge directions.
\end{remark}

The scalar power family admits a closed-form characterization.

\begin{corollary}
\label{cor:alpha}
Let $K$ be nonempty and compact and let $w>0$ be $C^1$ on a neighborhood of $K$. Set
\[
a(x)=\divg_gf(x),\qquad b(x)=\Lie_f\ln w(x).
\]
For a prescribed $\lambda\in\R$, define the extended-real bounds
\begin{align*}
L_\lambda&:=\sup_{\{x\in K:b(x)<0\}}
\frac{-\lambda-a(x)}{b(x)}\in[-\infty,+\infty],\notag\\
U_\lambda&:=\inf_{\{x\in K:b(x)>0\}}
\frac{-\lambda-a(x)}{b(x)}\in[-\infty,+\infty],\label{eq:LU}
\end{align*}
with $L_\lambda=-\infty$ when $\{b<0\}$ is empty and $U_\lambda=+\infty$ when $\{b>0\}$ is empty. If
\begin{equation}\label{eq:zero-compat}
a(x)\le-\lambda\qquad\text{whenever }b(x)=0,
\end{equation}
then the set of \emph{finite} admissible exponents is exactly
\begin{equation}\label{eq:alpha-interval}
\begin{split}
\mathcal I_\lambda
&:=\{\alpha\in\R:D_\alpha f\le-\lambda\text{ on }K\}\\
&=\{\alpha\in\R:L_\lambda\le\alpha\le U_\lambda\}.
\end{split}
\end{equation}
If \eqref{eq:zero-compat} fails, then $\mathcal I_\lambda=\varnothing$. Under \eqref{eq:zero-compat}, the set in \eqref{eq:alpha-interval} is nonempty if and only if
\begin{equation}\label{eq:interval-nonempty}
L_\lambda\le U_\lambda,\qquad
L_\lambda<+\infty,\qquad U_\lambda>-\infty.
\end{equation}
For a constrained interval $\ThetaSet=[\alpha_{\min},\alpha_{\max}]$, the feasible exponents are $\mathcal I_\lambda\cap\ThetaSet$.
\end{corollary}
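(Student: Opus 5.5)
The proof is a pointwise case split on the sign of $b(x)$, followed by intersection over $K$. By Remark~\ref{rem:power}, $D_\alpha f(x)=a(x)+\alpha b(x)$. Hence $\alpha\in\mathcal I_\lambda$ if and only if
\[
a(x)+\alpha b(x)\le-\lambda\quad\text{for all } x\in K .
\]
This is the scalar case $p=1$ of the half-space representation \eqref{eq:halfspaces}, with $\ThetaSet$ replaced by $\R$.

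Fix $x\in K$ and split into three cases. If $b(x)>0$, dividing gives $\alpha\le(-\lambda-a(x))/b(x)$. If $b(x)<0$, dividing reverses the inequality and gives $\alpha\ge(-\lambda-a(x))/b(x)$. If $b(x)=0$, the constraint reads $a(x)\le-\lambda$ and does not involve $\alpha$.

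Consequently, if \eqref{eq:zero-compat} fails at some $x$ with $b(x)=0$, no $\alpha$ satisfies the constraint at that point, and $\mathcal I_\lambda=\varnothing$. If \eqref{eq:zero-compat} holds, the points with $b(x)=0$ impose nothing. Intersecting the remaining upper bounds over $\{b>0\}$ and lower bounds over $\{b<0\}$ gives
\[
\{\alpha\in\R:\ \alpha\le U_\lambda,\ \alpha\ge L_\lambda\}.
\]
Here the conventions for empty index sets ($L_\lambda=-\infty$, $U_\lambda=+\infty$) correctly encode the absence of constraints. Intersection with $\ThetaSet$ then gives the last claim immediately.

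The only step needing care is the extended-real bookkeeping. The sets $\{b>0\}$ and $\{b<0\}$ are open in $K$ and in general not compact. Near the zero set of $b$ the ratios may therefore tend to $\pm\infty$, so $U_\lambda$ may equal $-\infty$ or $L_\lambda$ may equal $+\infty$. This happens, for instance, when $a(x_0)>-\lambda$ at a limit point $x_0$ where $b(x_0)=0$; \eqref{eq:zero-compat} excludes that configuration, but other sign patterns are possible. Such values must be treated through the identity
\[
\{\alpha\in\R:\alpha\le U\}=\bigcap_{x\in\{b>0\}}\Big\{\alpha:\alpha\le\frac{-\lambda-a(x)}{b(x)}\Big\},
\]
which holds for any extended real $U=\inf$ of the ratios. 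In particular, the set is empty when $U=-\infty$, and the analogous statement holds for $L_\lambda$. The sup and inf are not assumed to be attained, and none of the steps above requires attainment.

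Finally, the set $\{\alpha\in\R:L_\lambda\le\alpha\le U_\lambda\}$ of real numbers is nonempty exactly when $L_\lambda\le U_\lambda$ and some finite $\alpha$ lies between them. This requires $L_\lambda<+\infty$ and $U_\lambda>-\infty$, which together with $L_\lambda\le U_\lambda$ is precisely \eqref{eq:interval-nonempty}. Conversely, under these three conditions one can pick a real $\alpha$ in the interval: take $\alpha=L_\lambda$ if it is finite, else $\alpha=U_\lambda$ if that is finite, else $\alpha=0$.
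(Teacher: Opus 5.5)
Your proposal is correct and follows the paper's proof: a pointwise case split on the sign of $b(x)$ in the affine constraint $a(x)+\alpha b(x)\le-\lambda$, where points with $b(x)=0$ yield exactly \eqref{eq:zero-compat} and the remaining points intersect to $L_\lambda\le\alpha\le U_\lambda$. Your extended-real bookkeeping adds the identity $\alpha\le\inf S\iff\alpha\le s$ for all $s\in S$, which holds even when the infimum is $\pm\infty$ or not attained, together with an explicit real witness; this makes rigorous the paper's one-line claim that a real $\alpha$ exists exactly under \eqref{eq:interval-nonempty}.
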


\begin{proof}
At a point where $b(x)>0$, the inequality $a(x)+\alpha b(x)\le-\lambda$ is equivalent to
$\alpha\le(-\lambda-a(x))/b(x)$; taking all such points gives $\alpha\le U_\lambda$. At a point where $b(x)<0$, it is equivalent to the lower bound $\alpha\ge(-\lambda-a(x))/b(x)$, hence $\alpha\ge L_\lambda$. Points with $b(x)=0$ impose exactly \eqref{eq:zero-compat}. Therefore the finite feasible exponents are precisely \eqref{eq:alpha-interval}. Such a real number exists exactly under \eqref{eq:interval-nonempty}.
\end{proof}

\section{Weighted-volume contraction and invariant sets}\label{sec:volume}

We now connect the design margin to transported sets. Let $K$ be a compact positively invariant region, i.e. $\Phi_t(K)\subseteq K$ for $t\ge0$.

\begin{theorem}
\label{th:contraction}
If, for some $\theta$ and $\lambda>0$,
\begin{equation}\label{eq:Dnegative}
D_\theta f(x)\le-\lambda,\qquad x\in K,
\end{equation}
then every measurable set $D\subset K$ satisfies
\begin{equation}\label{eq:exp-contract}
\mu_\theta(\Phi_t(D))\le e^{-\lambda t}\mu_\theta(D),
\qquad t\ge0.
\end{equation}
In particular, any $\theta\in\A_K(\lambda)$ from Theorem~\ref{th:design} certifies the rate $\lambda$.
\end{theorem}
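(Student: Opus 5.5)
The approach is a Grönwall argument applied to the transport identity of Proposition~\ref{prop:transport}, with positive invariance of $K$ used to keep the integrand controlled for all times. Fix a measurable $D\subset K$. Since $K$ is compact, $D$ has compact closure, so Proposition~\ref{prop:transport} applies. Set $m(t):=\mu_\theta(\Phi_t(D))$. Positive invariance gives $\Phi_t(D)\subseteq\Phi_t(K)\subseteq K$ for all $t\ge0$. Hence the compact flow tube required in the proof of Proposition~\ref{prop:transport} can be taken to be $K$ itself, and $m$ is differentiable on $[0,\infty)$, one-sided at $t=0$. Moreover $m(t)\le\mu_\theta(K)<\infty$, because $\rho_\theta$ is continuous on the compact set $K$.

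Next, insert \eqref{eq:Dnegative} into \eqref{eq:transport}. Every point of $\Phi_t(D)$ lies in $K$, and $\rho_\theta>0$, so
\[
\dot m(t)=\int_{\Phi_t(D)}D_\theta f\,\rho_\theta\,dV_g\le-\lambda\int_{\Phi_t(D)}\rho_\theta\,dV_g=-\lambda\,m(t).
\]
It follows that $\frac{d}{dt}\big(e^{\lambda t}m(t)\big)\le0$. Integrating from $0$ to $t$ and using $m(0)=\mu_\theta(D)$ yields \eqref{eq:exp-contract}. The final claim is immediate: $\theta\in\A_K(\lambda)$ means by definition that $D_\theta f\le-\lambda$ on $K$, which is exactly \eqref{eq:Dnegative}.

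The only step needing care is the regularity behind the identity. Measurability of $\Phi_t(D)$ and differentiation under the integral both rely on the Jacobian representation $m(t)=\int_D\rho_\theta(\Phi_t(x))J_t(x)\,dV_g(x)$. To sidestep delicate issues with images of sets, I would argue directly in this form. The integrand $\rho_\theta(\Phi_t(x))J_t(x)$ satisfies
\[
\partial_t\big[\rho_\theta(\Phi_t)J_t\big]=(D_\theta f\circ\Phi_t)\,\rho_\theta(\Phi_t)J_t\le-\lambda\,\rho_\theta(\Phi_t)J_t
\]
pointwise for $x\in D$, since $\Phi_t(x)\in K$. Grönwall's inequality then gives the pointwise bound
\[
\rho_\theta(\Phi_t(x))J_t(x)\le e^{-\lambda t}\rho_\theta(x),
\]
and integrating over $D$ gives \eqref{eq:exp-contract} without differentiating $m$ at all. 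This pointwise route also covers the case where $\Phi_t$ is a semiflow that need not be injective: the change-of-variables integral then bounds $\mu_\theta(\Phi_t(D))$ from above, with equality in the injective case, and an upper bound is exactly what is needed.
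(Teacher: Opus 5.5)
Your proof is correct. Your first argument is exactly the paper's: positive invariance keeps $\Phi_t(D)$ inside $K$, the transport identity of Proposition~\ref{prop:transport} together with \eqref{eq:Dnegative} gives $\dot m\le-\lambda m$, and Gr\"onwall's inequality finishes.

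Your second, pointwise route is a genuine if mild variant. You apply Gr\"onwall to $u_x(t)=\rho_\theta(\Phi_t(x))J_t(x)$ along each trajectory and only then integrate over $D$. This buys two things:
\begin{itemize}
\item You never differentiate under the integral sign, so you avoid the flow-tube justification in the proof of Proposition~\ref{prop:transport}.
\item You get the stronger statement that the $\mu_\theta$-Jacobian of $\Phi_t$ is at most $e^{-\lambda t}$ pointwise on $K$. This is the continuous-time counterpart of the bound $J_\theta P\le e^{-\gamma}$ used in the proof of Theorem~\ref{th:discrete}.
\end{itemize}

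Two small corrections to your commentary:
\begin{itemize}
\item The pointwise route does not remove the need for $\Phi_t(D)$ to be measurable. The change-of-variables formula presupposes it. It does hold, because a $C^1$ map sends null sets to null sets and $\sigma$-compact sets to $\sigma$-compact sets.
\item The non-injective case cannot arise in this setting. The flow of a $C^1$, hence locally Lipschitz, vector field is injective by backward uniqueness of solutions. Your area-formula remark is harmless but unnecessary.
\end{itemize}
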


\begin{proof}
Positive invariance gives $\Phi_t(D)\subseteq K$. Proposition~\ref{prop:transport} and \eqref{eq:Dnegative} imply
\[
\frac{d}{dt}\mu_\theta(\Phi_t(D))
\le-\lambda\mu_\theta(\Phi_t(D)).
\]
Gronwall's inequality yields \eqref{eq:exp-contract}.
\end{proof}

\begin{remark}
\label{rem:nullsets}
On compact $K$, the positive continuous density satisfies
$0<m_\theta\le\rho_\theta\le M_\theta<\infty$. Hence, for measurable $E\subset K$,
\begin{equation}\label{eq:equivmeas}
m_\theta\vol_g(E)\le\mu_\theta(E)\le M_\theta\vol_g(E).
\end{equation}
Therefore weighted and Riemannian measures have exactly the same null sets on $K$. The value of density shaping is not a new notion of ``zero volume'' on compact regions. It is the possibility of obtaining a useful sign and rate in the transport equation when the ordinary divergence does not provide one.
\end{remark}

The following consequence corrects a common overstatement: a nonpositive density divergence does not exclude all invariant sets, but under a dense strictness condition it excludes invariant Jordan sets of positive ambient volume.

Here a compact set is called Jordan measurable if its boundary has zero Riemannian volume.

\begin{proposition}
\label{prop:invariant}
Let $O\subset M$ be open. Suppose $D_\theta f\le0$ on $O$ and the set
$\{x\in O:D_\theta f(x)<0\}$ is dense in $O$. Then every compact Jordan-measurable set $A\subset O$ satisfying $\Phi_t(A)=A$ for all $t\ge0$ satisfies
\begin{equation*}
\label{eq:zero-measure}
\mu_\theta(A)=\vol_g(A)=0.
\end{equation*}
\end{proposition}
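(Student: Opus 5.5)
The plan is to compare the transport identity of Proposition~\ref{prop:transport} with the invariance $\Phi_t(A)=A$. Since $A$ is compact with $A\subset O$, Proposition~\ref{prop:transport} applies to $D=A$. Invariance makes $t\mapsto\mu_\theta(\Phi_t(A))=\mu_\theta(A)$ constant, so at $t=0$
\[
0=\frac{d}{dt}\mu_\theta(\Phi_t(A))\Big|_{t=0}=\int_A D_\theta f\,\rho_\theta\,dV_g .
\]
The integrand is continuous, nonpositive on $A\subset O$, and $\rho_\theta>0$. Hence $D_\theta f=0$ holds $\vol_g$-almost everywhere on $A$, and by continuity it holds everywhere on the support of $\vol_g|_A$.

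Next I would use the Jordan hypothesis. Write $A=\operatorname{int}A\cup\partial A$ with $\vol_g(\partial A)=0$. Suppose $\vol_g(A)>0$. Then $\vol_g(\operatorname{int}A)>0$, so $\operatorname{int}A$ is a nonempty open subset of $O$. By density, $\operatorname{int}A$ contains a point $x_0$ with $D_\theta f(x_0)<0$. Continuity of $D_\theta f$, which is $C^0$ since $f\in C^1$ and $\psi\in C^1$, gives a ball $B\subset\operatorname{int}A$ around $x_0$ on which $D_\theta f\le -c<0$. Since $\rho_\theta\ge m_\theta>0$ on $A$ (Remark~\ref{rem:nullsets}), we get
\[
\int_A D_\theta f\,\rho_\theta\,dV_g\le -c\,m_\theta\vol_g(B)<0,
\]
which contradicts the vanishing derivative. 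Therefore $\vol_g(A)=0$. By \eqref{eq:equivmeas}, $\mu_\theta(A)=0$ as well.

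The main obstacle I expect is the justification of the transport step. Proposition~\ref{prop:transport} differentiates under the integral along a compact flow tube, and here the tube is $A$ itself, which is compact and invariant. So the differentiation is legitimate, and no integrability issue arises because $A\subset O$ keeps $D_\theta f\le0$ on the whole tube. An alternative that avoids derivatives is to integrate the identity from $0$ to $t$ and use $\Phi_s(A)=A$, which gives $t\int_A D_\theta f\rho_\theta\,dV_g=0$. I would mention this to make the argument robust.

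The Jordan condition is used only to pass from positive volume to nonempty interior. This is where the statement fails for fat Cantor-type invariant sets, and it is the point to emphasize.
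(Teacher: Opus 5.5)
Your proof is correct and follows essentially the same route as the paper's: invariance plus the transport identity force $\int_A D_\theta f\,\rho_\theta\,dV_g=0$, and the Jordan hypothesis turns positive volume into a nonempty interior, where density and continuity of $D_\theta f$ give a set of positive volume on which the integrand is strictly negative. The differences are cosmetic: you argue by contradiction on $\vol_g(A)$ and pass to $\mu_\theta$ afterwards via \eqref{eq:equivmeas}, whereas the paper starts from $\mu_\theta(A)>0$, and you exhibit the negative ball directly rather than first concluding $D_\theta f=0$ on $\operatorname{int}A$.
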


\begin{proof}
Assume, to the contrary, that $\mu_\theta(A)>0$. By \eqref{eq:equivmeas} on the compact set $A$, $\vol_g(A)>0$. Since $A$ is Jordan measurable, its boundary has zero volume. If $\operatorname{int}A$ were empty, compactness would imply $A=\partial A$ and hence $\vol_g(A)=0$, a contradiction. Thus $\operatorname{int}A\ne\varnothing$.

Invariance and Proposition~\ref{prop:transport}, applied to the compact measurable set $A$, give at $t=0$
\[
0=\int_A D_\theta f\,\rho_\theta\,dV_g.
\]
The integrand is nonpositive, so $D_\theta f=0$ almost everywhere on $A$. By continuity, it must vanish everywhere on the open set $\operatorname{int}A$: otherwise a point with a negative value would have a neighborhood of positive volume on which the integrand is strictly negative. But density of $\{D_\theta f<0\}$ forces this set to intersect $\operatorname{int}A$, a contradiction.
\end{proof}

\section{Discrete-time systems and Poincar\'e maps}\label{sec:discrete}

The same design structure survives discretization. Let $P:O\to P(O)\subset M$ be a $C^1$ diffeomorphism onto its image, where $O\subset M$ is open. Let $J_P(x)>0$ denote its positive Riemannian Jacobian (the absolute Jacobian with respect to $dV_g$), and let $K\subset O$ be compact and positively invariant, i.e. $P(K)\subseteq K$. Define
\begin{equation*}
\label{eq:weighted-jac}
J_\theta P(x):=
J_P(x)\frac{\rho_\theta(P(x))}{\rho_\theta(x)}
\end{equation*}
and its logarithm
\begin{equation}\label{eq:logjac}
\ell_\theta(x):=\ln J_\theta P(x)
=\ln J_P(x)+\theta^\top\big(\psi(P(x))-\psi(x)\big).
\end{equation}
Thus $\ell_\theta$ is affine in the same design parameter.

\begin{theorem}
\label{th:discrete}
For every measurable $D\subset K$,
\begin{equation}\label{eq:change-map}
\mu_\theta(P(D))=\int_D J_\theta P(x)\rho_\theta(x)\,dV_g(x).
\end{equation}
If, for some $\gamma>0$,
\begin{equation}\label{eq:disc-cond}
\ell_\theta(x)\le-\gamma,\qquad x\in K,
\end{equation}
then
\begin{equation}\label{eq:disc-contract}
\mu_\theta(P^k(D))\le e^{-k\gamma}\mu_\theta(D),
\qquad k\in\mathbb N.
\end{equation}
Moreover, the discrete design margin
\begin{equation*}
\label{eq:qP}
q_K^P(\theta):=\min_{x\in K}[-\ell_\theta(x)]
\end{equation*}
is concave and Lipschitz, and maximizing it over a compact convex $\ThetaSet$ is again a semi-infinite convex program.
\end{theorem}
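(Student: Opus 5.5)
The plan is to prove the three claims of Theorem~\ref{th:discrete} in order: the change-of-variables identity, the geometric decay under iteration, and the convex structure of the discrete margin. For \eqref{eq:change-map}, since $P$ is a $C^1$ diffeomorphism of $O$ onto its image, the Riemannian change-of-variables formula gives
\[
\mu_\theta(P(D))=\int_{P(D)}\rho_\theta\,dV_g=\int_D\rho_\theta(P(x))\,J_P(x)\,dV_g(x).
\]
Multiplying and dividing by $\rho_\theta(x)>0$ turns the integrand into $J_\theta P(x)\rho_\theta(x)$, which is \eqref{eq:change-map}. Measurability of $P(D)$ is automatic because $P$ is a homeomorphism onto an open set. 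Since $D\subset K$ with $K$ compact, $\rho_\theta$ is bounded on $D$ and all integrals are finite.

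For \eqref{eq:disc-contract}, condition \eqref{eq:disc-cond} gives $J_\theta P(x)=e^{\ell_\theta(x)}\le e^{-\gamma}$ on $K$. Inserting this bound into \eqref{eq:change-map} yields the one-step inequality $\mu_\theta(P(D))\le e^{-\gamma}\mu_\theta(D)$ for every measurable $D\subset K$. Positive invariance $P(K)\subseteq K$ ensures that $P^j(D)\subset K$ for all $j$, so the one-step bound can be applied to each $P^{j}(D)$ in turn. Induction on $k$ then gives \eqref{eq:disc-contract}. This is the discrete analogue of the Gronwall step in Theorem~\ref{th:contraction}.

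For the margin, set $\tilde a(x):=\ln J_P(x)$ and $\tilde b(x):=\psi(P(x))-\psi(x)$. Both are continuous on $K$: $J_P$ is continuous and positive because $P$ is $C^1$ with invertible differential, and $\psi$ is $C^1$. By \eqref{eq:logjac},
\[
q_K^P(\theta)=\min_{x\in K}\big[-\tilde a(x)-\theta^\top\tilde b(x)\big],
\]
which has exactly the form \eqref{eq:q} with $(a,b)$ replaced by $(\tilde a,\tilde b)$. The proof of Theorem~\ref{th:design} used only the continuity of $a$ and $b$ on compact $K$. Repeating it verbatim therefore gives three conclusions:
\begin{itemize}
\item $q_K^P$ is concave, as a pointwise minimum of affine functions of $\theta$;
\item $q_K^P$ is Lipschitz with constant $\max_K\|\tilde b\|_2$;
\item its maximum over a compact convex $\ThetaSet$ is attained and equals the value of the hypograph program with affine constraints $\tilde a(x)+\theta^\top\tilde b(x)+\lambda\le0$ for $x\in K$, which is a semi-infinite convex program.
\end{itemize}

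The only point needing care is the first step: justifying the change of variables with the absolute Jacobian relative to $dV_g$ on a manifold, and confirming that $J_P$ stays bounded away from zero on $K$ so that $\ln J_P$ is continuous. Both follow from $P$ being a $C^1$ diffeomorphism, since $J_P$ is then continuous and positive, hence bounded below on the compact set $K$. Everything after that step is a direct transcription of earlier results.
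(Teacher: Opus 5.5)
Your proposal is correct and follows essentially the same route as the paper: the change-of-variables formula with the absolute Jacobian, the pointwise bound $J_\theta P=e^{\ell_\theta}\le e^{-\gamma}$ iterated using $P(K)\subseteq K$, and the observation that $\ell_\theta$ is affine in $\theta$. That last point lets the proof of Theorem~\ref{th:design} transfer verbatim with $a=\ln J_P$ and $b=\psi\circ P-\psi$, and your added remarks on measurability of $P(D)$ and on the continuity and positivity of $J_P$ on $K$ only make explicit details the paper leaves implicit.
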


\begin{proof}
Equation \eqref{eq:change-map} is the change-of-variables formula. Under \eqref{eq:disc-cond}, $J_\theta P\le e^{-\gamma}$, hence
$\mu_\theta(P(D))\le e^{-\gamma}\mu_\theta(D)$. Positive invariance permits iteration and gives \eqref{eq:disc-contract}. Finally, \eqref{eq:logjac} is affine in $\theta$, so the proof of Theorem~\ref{th:design} applies verbatim with $a(x)=\ln J_P(x)$ and $b(x)=\psi(P(x))-\psi(x)$.
\end{proof}

\begin{corollary}
\label{cor:discrete-invariant}
Under the hypotheses of Theorem~\ref{th:discrete}, suppose a compact measurable set $B\subset K$ satisfies $P(B)\subseteq B$ and \eqref{eq:disc-cond}. Then
\begin{equation*}\label{eq:Ainf}
A_\infty:=\bigcap_{k=0}^{\infty}P^k(B)
\end{equation*}
has $\mu_\theta(A_\infty)=0$ and, consequently, $\vol_g(A_\infty)=0$.
\end{corollary}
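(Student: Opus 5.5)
The plan is to combine monotonicity of the nested sequence $P^k(B)$ with the geometric decay of Theorem~\ref{th:discrete}. Then continuity of the finite measure $\mu_\theta$ from above gives $\mu_\theta(A_\infty)=0$, and Remark~\ref{rem:nullsets} transfers the conclusion to $\vol_g$.

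First I will check that the sets $B_k:=P^k(B)$ are nested and measurable. Since $P(B)\subseteq B$, induction gives $B_{k+1}=P^k(P(B))\subseteq P^k(B)=B_k$, so $B_0\supseteq B_1\supseteq\cdots$ and all $B_k\subseteq B\subseteq K$. Each $B_k$ is compact, being a continuous image of the compact set $B$, hence Borel. The hypotheses of Theorem~\ref{th:discrete} hold with $D=B\subset K$. I read ``$B$ satisfies \eqref{eq:disc-cond}'' as meaning $\ell_\theta\le-\gamma$ on $K$, or at least on $B$. In the second case I will apply the theorem with $K$ replaced by $B$, which is legitimate because $B$ is itself compact and positively invariant. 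Either way \eqref{eq:disc-contract} gives
\[
\mu_\theta(B_k)\le e^{-k\gamma}\mu_\theta(B)\qquad k\in\mathbb N.
\]

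Next, since $A_\infty\subseteq B_k$ for every $k$, monotonicity gives $\mu_\theta(A_\infty)\le e^{-k\gamma}\mu_\theta(B)$. Here $\mu_\theta(B)\le M_\theta\vol_g(K)<\infty$, because $B$ is compact and $\rho_\theta$ is bounded on $K$. Letting $k\to\infty$ with $\gamma>0$ yields $\mu_\theta(A_\infty)=0$; this step does not even need continuity from above. Finally, $A_\infty$ is a compact subset of $K$, being an intersection of compact sets. By \eqref{eq:equivmeas}, $m_\theta\vol_g(A_\infty)\le\mu_\theta(A_\infty)=0$ with $m_\theta>0$, so $\vol_g(A_\infty)=0$.

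The main obstacle is interpreting the hypothesis that $B$ ``satisfies \eqref{eq:disc-cond}'' and confirming that the iteration in Theorem~\ref{th:discrete} only needs invariance of the set being iterated. I will handle this by noting that the proof of \eqref{eq:disc-contract} uses only the one-step bound $J_\theta P\le e^{-\gamma}$ on the current iterate, together with $P^k(D)\subseteq B$. Applying \eqref{eq:change-map} to $D=P^{k}(B)\subseteq B$ then gives the one-step estimate $\mu_\theta(P^{k+1}(B))\le e^{-\gamma}\mu_\theta(P^{k}(B))$ directly, and the decay follows by induction.
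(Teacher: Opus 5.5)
Your proof is correct and follows the paper's argument. It uses the nestedness of the iterates $P^k(B)$, the geometric bound $\mu_\theta(P^k(B))\le e^{-k\gamma}\mu_\theta(B)$ from Theorem~\ref{th:discrete}, and the transfer to $\vol_g$ via the two-sided density bounds on $K$. You use plain monotonicity, $\mu_\theta(A_\infty)\le\mu_\theta(P^k(B))$, where the paper invokes continuity from above, which is a harmless simplification; the paper's extra remark that $P(A_\infty)=A_\infty$ is not needed for the stated claim.
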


\begin{proof}
The sets $P^k(B)$ are nested and have finite $\mu_\theta$-measure. By \eqref{eq:disc-contract},
$\mu_\theta(P^k(B))\le e^{-k\gamma}\mu_\theta(B)\to0$. Continuity of finite measures from above gives $\mu_\theta(A_\infty)=0$, and \eqref{eq:equivmeas} gives the Riemannian null-measure statement. Moreover, $P(A_\infty)=A_\infty$: the inclusion $P(A_\infty)\subseteq A_\infty$ is immediate, while for $y\in A_\infty$ the unique $x\in B$ with $P(x)=y$ belongs to every $P^k(B)$ by injectivity of $P$, hence $x\in A_\infty$.
\end{proof}

\begin{remark}
Theorem~\ref{th:discrete} applies directly to a Poincar\'e return map on a transverse section whenever the return map is well defined and injective on the region of interest. It is intentionally stated as a discrete-time theorem rather than obtained by formally applying a continuous-time flow theorem to a map.
\end{remark}

\section{Distance-weighted moments near invariant submanifolds}\label{sec:submanifold}

Let $N\subset M$ be a compact embedded invariant submanifold without boundary. On a sufficiently small tubular neighborhood $U$ of $N$, the squared Riemannian distance
\begin{equation*}\label{eq:delta}
\delta_N(x):=\dist_g(x,N)^2
\end{equation*}
is smooth. This avoids the nonsmoothness of the unsquared distance on $N$. Define the distance-weighted moment of a measurable set $D\subset U$ by
\begin{equation*}\label{eq:moment}
\mathcal M_{N,\theta}(D):=
\int_D\delta_N(x)\rho_\theta(x)\,dV_g(x).
\end{equation*}
This is an ambient weighted moment measuring how much transported volume lies away from $N$. It is not identified with a geometric transverse volume.

\begin{theorem}
\label{th:distance}
Assume $U$ is positively invariant and, for some $\lambda>0$,
\begin{equation}\label{eq:moment-cond}
\Lie_f\delta_N(x)+\delta_N(x)D_\theta f(x)
\le-\lambda\delta_N(x),
\qquad x\in U.
\end{equation}
Then every measurable $D\Subset U$ satisfies
\begin{equation}\label{eq:moment-contract}
\mathcal M_{N,\theta}(\Phi_t(D))
\le e^{-\lambda t}\mathcal M_{N,\theta}(D).
\end{equation}
If, independently,
\begin{equation}\label{eq:pointwise-distance}
\Lie_f\delta_N\le-2\eta\delta_N
\quad\text{on }U
\end{equation}
for some $\eta>0$, then
\begin{equation}\label{eq:distance-exp}
\dist_g(\Phi_t(x),N)
\le e^{-\eta t}\dist_g(x,N),\qquad x\in U.
\end{equation}
\end{theorem}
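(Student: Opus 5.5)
The moment estimate \eqref{eq:moment-contract} will follow by rerunning the argument of Proposition~\ref{prop:transport} with the density $\rho_\theta$ replaced by the nonnegative weight $\delta_N\rho_\theta$. The pointwise estimate \eqref{eq:distance-exp} is a scalar comparison argument along a single trajectory. The two parts are independent, matching the word ``independently'' in the statement.

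For the first part, fix a measurable $D\Subset U$. Change of variables gives
\[
\mathcal M_{N,\theta}(\Phi_t(D))=\int_D(\delta_N\rho_\theta)(\Phi_t(x))J_t(x)\,dV_g(x),
\]
with $\dot J_t=(\divg_g f)\circ\Phi_t\,J_t$. Differentiation under the integral is justified as in Proposition~\ref{prop:transport}. Positive invariance of $U$ together with compactness of $\overline D\subset U$ keeps the integrand $C^1$ in $t$ on bounded intervals: $\delta_N$ is smooth on $U$ and the image stays in a compact flow tube. This yields
\[
\frac{d}{dt}\mathcal M_{N,\theta}(\Phi_t(D))=\int_{\Phi_t(D)}\big(\Lie_f\delta_N+\delta_N\,\rho_\theta^{-1}\Lie_f\rho_\theta+\delta_N\divg_gf\big)\rho_\theta\,dV_g .
\]
By \eqref{eq:Dtheta}, the bracket equals $\Lie_f\delta_N+\delta_N D_\theta f$. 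Since $\Phi_t(D)\subset U$, hypothesis \eqref{eq:moment-cond} bounds the integrand by $-\lambda\delta_N\rho_\theta$, so
\[
\frac{d}{dt}\mathcal M\le-\lambda\mathcal M .
\]
Gronwall's inequality then gives \eqref{eq:moment-contract}, exactly as in Theorem~\ref{th:contraction}.

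The main technical point is the regularity needed for this differentiation. One option is to assume $\Phi_t(\overline D)$ stays in a compact subset of $U$. Alternatively, one can note that each closed set $\Phi_t(\overline D)$ is compact and contained in $U$ by invariance, which suffices on each bounded time interval.

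For the second part, fix $x\in U$ and set $v(t):=\delta_N(\Phi_t(x))$. This is $C^1$ because $\Phi_t(x)\in U$, and $\dot v=(\Lie_f\delta_N)(\Phi_t(x))\le-2\eta v$ by \eqref{eq:pointwise-distance}. The scalar Gronwall inequality gives $v(t)\le e^{-2\eta t}v(0)$. Taking square roots, using $\dist_g\ge0$, yields \eqref{eq:distance-exp}. Invariance of $N$ is consistent with this: $v\equiv0$ on $N$.
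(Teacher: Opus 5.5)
Your proposal is correct and follows the paper's proof. The moment estimate comes from the transport computation with weight $\delta_N\rho_\theta$ followed by Gronwall, and the pointwise bound comes from a scalar comparison for $\delta_N(\Phi_t(x))$ and then taking square roots; you redo the change-of-variables argument with a compact flow-tube justification instead of citing Proposition~\ref{prop:transport} (stated only for positive log-affine densities), which is a slightly more careful version of the same step.
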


\begin{proof}
Applying the Proposition \ref{prop:transport} to the smooth nonnegative weight $\delta_N\rho_\theta$ gives
\[
\frac{d}{dt}\mathcal M_{N,\theta}(\Phi_t(D))
=\int_{\Phi_t(D)}\rho_\theta
\big(\Lie_f\delta_N+\delta_ND_\theta f\big)dV_g.
\]
Condition \eqref{eq:moment-cond} and Gronwall's inequality yield \eqref{eq:moment-contract}. For an individual trajectory, \eqref{eq:pointwise-distance} gives
$\delta_N(\Phi_t(x))\le e^{-2\eta t}\delta_N(x)$; taking square roots gives \eqref{eq:distance-exp}.
\end{proof}

\begin{remark}
The two conclusions of Theorem~\ref{th:distance} are logically distinct. Condition \eqref{eq:moment-cond} controls an integral quantity for transported sets. Pointwise exponential attraction to $N$ follows only from the additional differential inequality \eqref{eq:pointwise-distance} (or another genuine pointwise stability argument).
\end{remark}

\section{Conformal interpretation}\label{sec:geometry}

The density divergence has a simple differential-geometric interpretation. 
Define
$\widetilde g_\theta:=\rho_\theta^{2/n}g$.

\begin{proposition}
\label{prop:conformal}
The Riemannian volume form of $\widetilde g_\theta$ is
\begin{equation}\label{eq:vol-conformal}
dV_{\widetilde g_\theta}=\rho_\theta\,dV_g,
\end{equation}
and
\begin{equation}\label{eq:div-conformal}
\divg_{\widetilde g_\theta}f
=\divg_gf+\Lie_f\ln\rho_\theta
=D_\theta f.
\end{equation}
Thus weighted phase-volume contraction in $(M,g)$ is ordinary Riemannian volume contraction in the conformal metric $\widetilde g_\theta$.
\end{proposition}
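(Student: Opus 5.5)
The plan is to work in local coordinates and use only two standard facts: how the Riemannian volume form scales under a conformal change, and the coordinate formula for divergence with respect to a volume form.

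\emph{Volume form.} Fix a positively oriented chart $(x^1,\dots,x^n)$. In it, $dV_g=\sqrt{\det g_{ij}}\,dx^1\wedge\cdots\wedge dx^n$. Since $\widetilde g_\theta=\rho_\theta^{2/n}g$, the components are $(\widetilde g_\theta)_{ij}=\rho_\theta^{2/n}g_{ij}$. Therefore $\det(\widetilde g_\theta)_{ij}=\rho_\theta^{2}\det g_{ij}$. Taking the positive square root, which is legitimate because $\rho_\theta>0$, gives $\sqrt{\det\widetilde g_\theta}=\rho_\theta\sqrt{\det g}$. This is \eqref{eq:vol-conformal}, and it is chart-independent because both sides are globally defined $n$-forms.

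\emph{Divergence.} I will use the characterization of divergence through the Lie derivative of the volume form: $\Lie_f\,dV=(\divg f)\,dV$. Equivalently, in coordinates, $\divg f=\mu^{-1}\partial_i(\mu f^i)$, where $dV=\mu\,dx^1\wedge\cdots\wedge dx^n$. Applying this with $\mu=\rho_\theta\sqrt{\det g}$ and using the product rule gives
\[
\divg_{\widetilde g_\theta}f=\rho_\theta^{-1}\divg_g(\rho_\theta f)=\divg_gf+\Lie_f\ln\rho_\theta .
\]
The same result follows coordinate-free: $\Lie_f(\rho_\theta dV_g)=(\Lie_f\rho_\theta)dV_g+\rho_\theta\divg_gf\,dV_g$, and dividing by $\rho_\theta dV_g$ yields the identity. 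Next, $\ln\rho_\theta=\theta^\top\psi$, so $\Lie_f\ln\rho_\theta=\theta^\top\Lie_f\psi$. Comparing with \eqref{eq:Dtheta} in Proposition~\ref{prop:transport} gives $\divg_{\widetilde g_\theta}f=D_\theta f$. This is \eqref{eq:div-conformal}.

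\emph{Interpretation.} The final sentence of the statement then follows by combining these identities with Proposition~\ref{prop:transport}. By \eqref{eq:vol-conformal}, $\mu_\theta(D)=\vol_{\widetilde g_\theta}(D)$. By \eqref{eq:div-conformal}, the transport identity \eqref{eq:transport} is exactly the classical Liouville formula for $\widetilde g_\theta$-volume. Hence any weighted-contraction statement, such as Theorem~\ref{th:contraction}, reads as ordinary volume contraction in the conformal metric.

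\emph{Expected obstacle.} There is no real difficulty here. The only point needing care is regularity. Since $\psi\in C^1$, the metric $\widetilde g_\theta$ is only $C^1$, not smooth. The formulas above use only first derivatives of $\rho_\theta$ and no Christoffel symbols, so the divergence is well defined. To avoid any issue, I will define the divergence via the volume form rather than via the Levi-Civita connection.
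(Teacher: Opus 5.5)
Your proposal is correct and follows the paper's proof essentially verbatim. The determinant of $\rho_\theta^{2/n}g$ scales by $\rho_\theta^2$, which gives the volume form $\rho_\theta\,dV_g$, and divergence with respect to that volume form is $\rho_\theta^{-1}\divg_g(\rho_\theta f)$, which Proposition~\ref{prop:transport} identifies with $D_\theta f$; your extra care about orientation and about defining divergence through $\Lie_f\,dV$ (since $\widetilde g_\theta$ is only $C^1$) is a sound refinement that the paper leaves implicit.
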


\begin{proof}
If a metric is multiplied by the scalar factor $\rho_\theta^{2/n}$, its determinant is multiplied by $\rho_\theta^2$, hence its volume form by $\rho_\theta$, proving \eqref{eq:vol-conformal}. The divergence with respect to a volume form $\rho_\theta dV_g$ is $\rho_\theta^{-1}\divg_g(\rho_\theta f)$, which equals \eqref{eq:div-conformal} by Proposition~\ref{prop:transport}.
\end{proof}

\begin{remark}
For the scalar family $\rho_\alpha=w^\alpha$, the conformal metric is
$\widetilde g_\alpha=w^{2\alpha/n}g$. In particular, $\alpha=1$ gives $w^{2/n}g$, not $wg$ unless $n=2$.
\end{remark}

\section{Analytically solvable two-parameter example}\label{sec:example}

Let $M=\R^2\setminus\{0\}$ with the Euclidean metric. In polar coordinates on $M$, consider the annulus
\begin{equation*}\label{eq:K-example}
K:=\{(r,\vartheta):0.65\le r\le1.35,\ \vartheta\in\mathbb S^1\}
\end{equation*}
and the system
\begin{equation*}\label{eq:cycle}
\dot r=r(1-r),\qquad
\dot\vartheta=4+3\sin\vartheta.
\end{equation*}
The radial vector field points into $K$ at both radial boundaries, so $K$ is positively invariant. The circle $N=\{r=1\}$ is an attracting periodic orbit because $4+3\sin\vartheta\ge1$ and $r(1-r)$ changes sign across $r=1$.

For a vector field written in the coordinate basis $(\partial_r,\partial_\vartheta)$, the Euclidean divergence is
\begin{equation*}\label{eq:div-example}
\divg f
=\frac1r\frac{\partial}{\partial r}(r\dot r)
+\frac{\partial\dot\vartheta}{\partial\vartheta}
=2-3r+3\cos\vartheta.
\end{equation*}
Its maximum on $K$ equals $3.05>0$, so ordinary phase volume does not contract uniformly.

Choose two fixed basis functions
\begin{equation*}\label{eq:basis-example}
\psi_1(r,\vartheta)=-\ln(4+3\sin\vartheta),
\qquad
\psi_2(r,\vartheta)=\ln r,
\end{equation*}
and the compact parameter box
$\ThetaSet=[0,2]\times[-4,0]$.
The density is
\begin{equation*}\label{eq:rho-example}
\rho_\theta(r,\vartheta)
=(4+3\sin\vartheta)^{-\theta_1}r^{\theta_2}.
\end{equation*}
The Lie derivatives of the basis functions are
$\Lie_f\psi_1=-3\cos\vartheta$,
$\Lie_f\psi_2=1-r$.
Hence
\begin{equation}\label{eq:D-example}
D_\theta f
=2-3r+3(1-\theta_1)\cos\vartheta
+\theta_2(1-r).
\end{equation}
Maximizing \eqref{eq:D-example} over $K$ gives
\begin{align}
q_K(\theta)
&=-\max_{(r,\vartheta)\in K}D_\theta f\notag\\
&=1-3|\theta_1-1|-0.35|\theta_2+3|.\label{eq:q-example}
\end{align}
Indeed, the angular maximum contributes $3|1-\theta_1|$, whereas the radial affine term attains the maximum
\[
\max\{0.05+0.35\theta_2,\,-2.05-0.35\theta_2\}
=-1+0.35|\theta_2+3|.
\]
Therefore the constrained optimizer is unique:
\begin{equation*}\label{eq:theta-star}
\theta^*=(1,-3)^\top,\qquad q_K^*=1.
\end{equation*}
At this point
\begin{equation}\label{eq:Dstar-example}
D_{\theta^*}f\equiv-1~~\text{on }K,
\end{equation}
so every measurable $D\subset K$ satisfies
\begin{equation*}\label{eq:vol-example}
\mu_{\theta^*}(\Phi_t(D))\le e^{-t}\mu_{\theta^*}(D).
\end{equation*}
The optimal density
\[
\rho_{\theta^*}(r,\vartheta)
=\frac{1}{(4+3\sin\vartheta)r^3}
\]
is smooth and positive on $K$. Its conditioning on $K$ is
\begin{equation*}\label{eq:cond-example}
\frac{\max_K\rho_{\theta^*}}{\min_K\rho_{\theta^*}}
=7\left(\frac{1.35}{0.65}\right)^3\approx62.7.
\end{equation*}
Because $r>0$ on $M$ and $4+3\sin\vartheta\ge1$, the basis functions and every density in the prescribed family are smooth on $M$. Thus the improved rate is not produced by a singular density on the certified region.

The second basis function is essential for this regional certificate. If only the angular basis is allowed, i.e. $\theta_2=0$ and $\theta_1\in[0,2]$, then \eqref{eq:q-example} reduces to
\begin{equation*}\label{eq:single-basis-fail}
q_K(\theta_1,0)=-0.05-3|\theta_1-1|<0.
\end{equation*}
Hence no member of this natural one-basis family certifies uniform weighted-volume contraction on $K$, whereas independent radial shaping yields the unit rate in \eqref{eq:Dstar-example}. This illustrates the practical difference between a prescribed scalar family and a genuinely multi-parameter basis.

Once $\theta^*$ has been found, one can of course rename the single optimized function $\rho_{\theta^*}$ as a base weight and write it as $w^1$. That post hoc reparameterization is tautological. It does not solve the design problem of discovering the combination from the prescribed basis functions. The role of the multi-parameter family is precisely to make that combination a finite-dimensional optimization variable.

The active-point condition of Proposition~\ref{prop:active} is visible explicitly. The optimizer lies in the interior of $\ThetaSet$, and all points of $K$ are active because $D_{\theta^*}f\equiv-1$. For
\[
x_1=(0.65,0),\qquad x_2=(1.35,\pi),
\]
we have
\[
b(x_1)=(-3,0.35)^\top,\qquad
b(x_2)=(3,-0.35)^\top,
\]
so $\tfrac12b(x_1)+\tfrac12b(x_2)=0$. Thus only two active points are needed, consistent with the bound $m\le p+1=3$.

Finally, the same example illustrates Section~\ref{sec:submanifold}. Let
\[
U:=\operatorname{int}K=\{(r,\vartheta):0.65<r<1.35\}.
\]
This is a positively invariant tubular neighborhood of the invariant circle $N=\{r=1\}$. On $U$,
\[
\delta_N=(r-1)^2,
\qquad
\Lie_f\delta_N=-2r\delta_N.
\]
At $\theta^*$, equation \eqref{eq:Dstar-example} gives
\[
\Lie_f\delta_N+\delta_ND_{\theta^*}f
=-(2r+1)\delta_N\le-2.3\delta_N,
\qquad x\in U.
\]
Therefore Theorem~\ref{th:distance} yields, for every measurable $D\Subset U$,
\begin{equation*}\label{eq:moment-example}
\mathcal M_{N,\theta^*}(\Phi_t(D))
\le e^{-2.3t}\mathcal M_{N,\theta^*}(D).
\end{equation*}
Independently, $\Lie_f\delta_N=-2r\delta_N\le-1.3\delta_N$ on $U$, so the pointwise estimate \eqref{eq:distance-exp} holds for every $x\in U$ with $\eta=0.65$. The two rates refer to different objects and should not be conflated.

\section{Conclusion}\label{sec:conclusion}

The proposed framework turns regional weighted phase-volume contraction into a finite-dimensional and computationally tractable density-design problem. The log-affine parameterization leads to a convex max--min formulation, while active-point, minimax, and sampling certificates provide rigorous tools for both optimization and verification. The two-parameter example demonstrates that an appropriately designed density can certify uniform contraction in regions where ordinary divergence and a single-basis density fail.

The same structural principle applies to discrete-time maps and admits a natural conformal-geometric interpretation. Distance-weighted moments further extend the approach to invariant submanifolds, while preserving the distinction between set-wise contraction and pointwise stability. These results provide a systematic way to construct and certify weighted-volume contraction properties for nonlinear systems and create a basis for robust, adaptive, and controller-dependent density design.

\bibliographystyle{unsrtnat}
\bibliography{cas-refs-SCL-ready-v2}

\end{document}